\documentclass[a4paper, 12pt]{article}
\pdfoutput=1

\usepackage[T1]{fontenc}
\usepackage[utf8]{inputenc}

\usepackage{amsfonts,amsmath,amssymb,mathtools,microtype}
\usepackage[dvipsnames]{xcolor}
\usepackage[noBBpl]{mathpazo}

\DeclareFontFamily{U} {cmr}{}

\DeclareFontShape{U}{cmr}{m}{n}{
    <-6> cmr5
    <6-7> cmr6
    <7-8> cmr7
    <8-9> cmr8
    <9-10> cmr9
    <10-12> cmr10
    <12-> cmr12}{}

\DeclareSymbolFont{Xcmr} {U} {cmr}{m}{n}
\DeclareMathSymbol{\Delta}{\mathord}{Xcmr}{'001}
\DeclareMathSymbol{\Upsilon}{\mathord}{Xcmr}{'007}
\DeclareMathSymbol{\Omega}{\mathord}{Xcmr}{'012}
\DeclareMathSymbol{\Gamma}{\mathord}{Xcmr}{'000}
\DeclareMathSymbol{\Theta}{\mathord}{Xcmr}{'002}
\DeclareMathSymbol{\Lambda}{\mathord}{Xcmr}{'003}
\DeclareMathSymbol{\Xi}{\mathord}{Xcmr}{'004}
\DeclareMathSymbol{\Pi}{\mathord}{Xcmr}{'005}
\DeclareMathSymbol{\Sigma}{\mathord}{Xcmr}{'006}
\DeclareMathSymbol{\Phi}{\mathord}{Xcmr}{'010}
\DeclareMathSymbol{\Psi}{\mathord}{Xcmr}{'011}

\usepackage[labelsep = period, labelfont = bf, justification =
    centering]{caption}
\usepackage{float,graphicx,subcaption}
\DeclareCaptionSubType*[roman]{figure}

\usepackage{booktabs,makecell}
\usepackage{enumitem,mdwlist}
\setlist[itemize]{topsep=0ex,itemsep=0ex,parsep=0.4ex}
\setlist[enumerate]{topsep=0ex,itemsep=0ex,parsep=0.4ex}

\usepackage{pgf,tikz,ifthen,calc}
\usepackage{tkz-euclide}
\tkzSetUpPoint[fill=black, size = 3pt]
\tkzSetUpLine[color=black, line width=0.6pt]
\tkzSetUpCircle[color=black, line width=0.6pt]

\usepackage[left = 2.5cm, right = 2.5cm, top = 2.5cm, bottom = 2.5cm, headsep =
    12pt, headheight = 15pt]{geometry}
\usepackage{parskip}

\usepackage[hyphens]{url}
\usepackage[linktoc = all, hidelinks, colorlinks, unicode=true,
    pagebackref=true]{hyperref}
\usepackage[capitalise, compress, nameinlink, noabbrev]{cleveref}

\hypersetup{
    linkcolor={blue!70!black},
    citecolor={green!70!black},
    urlcolor={blue!70!black},
    pdftitle={Upper bounds for ordered Ramsey numbers of forests and
        bounded-degree graphs},
    pdfauthor={Lior Gishboliner and Xiangyu Li}
}

\usepackage{amsthm}
\theoremstyle{plain}
\newtheorem{thm}{Theorem}[section]
\newtheorem{claim}{Claim}[section]

\newtheorem{lem}[thm]{Lemma}

\newtheorem{que}[thm]{Question}

\newtheorem{construction}[thm]{Construction}

\theoremstyle{definition}

\crefname{subsection}{\S}{\S\S}
\newenvironment{proofclaim}[1][Proof of claim]
    {\begin{proof}[#1]}
    {\end{proof}}

\renewcommand*{\backref}[1]{}
\renewcommand*{\backrefalt}[4]{
    \ifcase #1 Not cited.\or \(\uparrow\)#2\else \(\uparrow\)#2\fi}

\renewcommand{\epsilon}{\varepsilon}
\renewcommand{\ge}{\geqslant}

\renewcommand{\geq}{\geqslant}
\renewcommand{\leq}{\leqslant}

\renewcommand{\subset}{\subseteq}

\DeclarePairedDelimiter{\ceil}{\lceil}{\rceil}

\newcommand*{\cA}{\mathcal{A}}

\newcommand{\defn}[1]{\textcolor{Maroon}{\emph{#1}}}

\newcommand{\Rord}{R_{<}}

\title{Upper bounds for ordered Ramsey numbers of forests and bounded-degree
    graphs}
\author{Lior Gishboliner\footnotemark[1] \and Xiangyu Li\footnotemark[2]}
\date{}
\begin{document}

\maketitle

\renewcommand{\thefootnote}{\fnsymbol{footnote}}
\footnotetext[1]{Department of Mathematics, University of
    Toronto, Canada. \emph{Email}: \href{mailto:lior.gishboliner@utoronto.ca}
    {\tt lior.gishboliner@utoronto.ca}. Research supported by an NSERC
    Discovery Grant.}
\footnotetext[2]{Department of Mathematics, University of Toronto, Canada.
    \emph{Email}: \href{mailto:xiangyuu.li@mail.utoronto.ca}{\tt
    xiangyuu.li@mail.utoronto.ca}. Research supported by a University of Toronto
    Excellence Award (UTEA).}

\renewcommand{\thefootnote}{\arabic{footnote}}

\begin{abstract}
    \noindent
    We prove the following two upper bounds for ordered Ramsey numbers:
    \begin{enumerate}[label=(\arabic*), leftmargin=*]
        \item Every ordered forest \(F\) on \(n\) vertices satisfies
            \[
                R_<(F,F)
                = O\bigl(n^{1+\lceil\log\chi_<(F)\rceil}\bigr).
            \]
            This in particular answers a question of Geneson, Holmes, Liu,
            Neidinger, Pehova and Wass.
        \item There is a function \(f\) such that, for every fixed ordered
            graph \(H\) with maximum degree at most \(\Delta\) and interval
            chromatic number at most \(k\), it holds that
            \[
                R_<(H,K_n)=O_H\bigl(n^{f(\Delta,k)}\bigr).
            \]
    \end{enumerate}
\end{abstract}

\section{Introduction}
An \defn{ordered graph} is a graph equipped with a linear order on its
vertices,\footnote{We will picture the vertices as ordered from left to right,
and thus speak of the leftmost/rightmost vertex, etc.} and copies are required
to preserve this order. For two ordered graphs \(G,H\), the \defn{ordered Ramsey
number} \(R_<(G,H)\) is the minimum \(N\) such that every red/blue edge-coloring
of \(K_N\) contains a red copy of \(G\) or a blue copy of \(H\). The
\defn{interval chromatic number} \(\chi_<(H)\) is the minimum number of
independent intervals partitioning \(V(H)\).

The systematic study of ordered Ramsey problems was initiated independently by
Balko et al.~\cite{BCKK} and Conlon et al.~\cite{CFLS}, although several
classical results (such as the Erd\H{o}s-Szekeres lemma \cite{ES}) can be cast
in this framework. For recent work on ordered Ramsey numbers,
see~\cite{BMSW,GJJ,KLPSY,Li2026}, and for a broader overview, see the survey of
Balko~\cite{Balko_survey}.

\subsection{Bounds on \texorpdfstring{\(R_<(F, F)\)}{R<(F, F)}}
Our first result answers a question of Geneson, Holmes, Liu, Neidinger, Pehova
and Wass~\cite{GHLNPW} (see also~\cite[Problem~4]{Balko_survey}), who
conjectured that \(R_<(P,P)=O(n^2)\) for every \(n\)-vertex ordered path \(P\)
with \(\chi_<(P)=2\). We prove this bound for all ordered forests of interval
chromatic number two, and extend our result by a routine induction to the case
of general ordered forests \(F\), where our bound on \(R_<(F,F)\) depends on the
interval chromatic number of \(F\). All logarithms in the paper are base 2.

\begin{thm}\label{cor:forest-ramsey}
    Let \(F\) be an \(n\)-vertex ordered forest, where \(n \geq 2\), and let
    \(H\) be an \(m\)-vertex ordered graph. Then
    \begin{equation}\label{eq:forest-off-diagonal}
        R_<(F,H)
        \leq 1 + (n-1)^{\ceil{\log \chi_<(H)}}(m-1).
    \end{equation}
    In particular, \(R_<(F,F) = O\bigl(n^{1+\ceil{\log\chi_<(F)}}\bigr)\).
\end{thm}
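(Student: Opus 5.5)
The plan is induction on \(t=\ceil{\log\chi_<(H)}\), with the whole difficulty concentrated in the base case \(\chi_<(H)\le 2\). Throughout, set \(M(t,m)=1+(n-1)^t(m-1)\). For \(t=0\) the graph \(H\) has no edges, so any \(m\) vertices form a blue copy of \(H\), and \(M(0,m)=m\) suffices. The final claim of the theorem follows by taking \(H=F\) and \(m=n\).

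\textbf{Base case (\(\chi_<(H)= 2\)).} It suffices to treat the complete ordered bipartite graph \(K_{a,m-a}\), meaning \(a\) left vertices, each joined to all \(m-a\) right vertices. In fact I will prove the stronger statement that one set of \(m\) vertices works simultaneously for every split point \(a\).
\begin{itemize}
\item Assume there is no red \(F\) on \(N=1+(n-1)(m-1)\) vertices.
\item Fix a canonical order \(u_1,\dots,u_n\) of \(V(F)\) in which each \(u_i\), for \(i\ge 2\), has at most one neighbour among earlier vertices (a tree/forest elimination order). This is chosen compatibly with the positions so that extensions can happen to the left or to the right.
\item To each vertex \(v\) of \(K_N\), assign a label \(\ell(v)\in\{1,\dots,n-1\}\), namely the largest \(i\) such that there is a red copy of \(\{u_1,\dots,u_i\}\) using \(v\) in a prescribed role. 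This is analogous to the Erd\H{o}s--Szekeres labelling by longest monotone subsequence ending at \(v\).
\item Pigeonhole gives \(m\) vertices \(x_1<\dots<x_m\) sharing the same label.
\item The key claim is that if \(x_i<x_j\) had a red edge, then the red partial copy at one endpoint could be extended through the other endpoint, increasing that endpoint's label. Hence every pair among the \(x_i\) is blue, which gives a blue copy of any \(H\) with \(\chi_<(H)\le 2\) on these vertices.
\end{itemize}
This step is the main obstacle, because ordered forests do not have a single natural ``end'' vertex. I would first try to define the label via an ordered-tree rooting, doing this per component and handling the components' interleaving by always extending at the current rightmost (or leftmost) unused vertex. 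If a single label fails, the fallback is to use a pair (left-label, right-label) but choose the argument so that only \(n-1\) classes are really needed.

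\textbf{Induction step.} Let \(\chi_<(H)=k\) with \(\ceil{\log k}=t\ge 2\). Split the intervals of \(H\) into a left part \(H_L\) (the first \(\ceil{k/2}\) intervals) and a right part \(H_R\). Each part has interval chromatic number with \(\ceil{\log}\) at most \(t-1\), and \(|H_L|+|H_R|=m\).

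Write \(N=M(t,m)=1+(n-1)\bigl(M'-1\bigr)\), where \(M'=1+(n-1)^{t-1}(m-1)\). Apply the base-case argument, now in its stronger ``labelled blocks'' form, to obtain a set \(S\) of \(M'\) vertices that are all equivalent with respect to the red-\(F\) labelling. Then run the induction hypothesis inside \(S\); since \(S\) has \(M'\ge M(t-1,m)\) vertices, this is enough.

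The point needing care here is that the induction hypothesis alone yields only \(H_L\) and \(H_R\) separately, with no control over the edges between them. So I would instead carry a strengthened inductive statement: in any coloring of \(M(t,m)\) vertices without red \(F\), there exist \(m\) vertices forming a blue copy of \(H\). This is produced by recursively splitting into two halves whose cross pairs are all blue, with the base lemma supplying the cross-blue property at each level of the recursion. The recursion has depth \(t\) and costs a factor \(n-1\) per level, which matches the bound \eqref{eq:forest-off-diagonal}.
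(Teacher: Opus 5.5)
\textbf{There is a genuine gap, and it sits in your base case.} Your target there, $R_<(F,K_{a,m-a})\le 1+(n-1)(m-1)$, is the right statement; it is exactly the paper's lemma for $K_{a,b}$. The argument you sketch, however, proves something much stronger, and that stronger statement is false. You claim that $m$ equally labelled vertices span only blue pairs, i.e.\ one set works for every split point $a$. That would give $R_<(F,K_m)\le 1+(n-1)(m-1)$ for every ordered forest. With $m=n$ it would give $R_<(F,F)\le 1+(n-1)^2$ regardless of $\chi_<(F)$. This contradicts the ordered matchings $M$ constructed by Conlon--Fox--Lee--Sudakov and by Balko--Cibulka--Kr\'al--Kyn\v{c}l, which satisfy $R_<(M,M)\ge n^{c\log n/\log\log n}$.

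So no labelling, single or paired, can make this step work. The Erd\H{o}s--Szekeres mechanism succeeds for the monotone path because whether a partial copy extends depends only on its last vertex. For a general ordered forest the new vertex must land in a prescribed gap among all previously embedded vertices, so a red edge between two equally labelled vertices need not yield a larger copy. Your induction step inherits the problem, and it is also internally inconsistent: if the base lemma really produced a blue clique, no recursion would be needed.

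\textbf{What is missing is a genuinely bipartite lemma.} You need $R_<(F,K_{a,b})\le 1+(n-1)(a+b-1)$, where blue is required only between a left set of size $a$ and a right set of size $b$. The paper proves it as follows.
\begin{enumerate}
\item Cut $[N]$ into consecutive intervals $V_1<\dots<V_n$, one per vertex $x_i$ of $F$, with $|V_i|=1+(b-1)d_i^-+(a-1)d_i^+$. This fixes the positions of the embedding in advance.
\item Process $F$ in leaf-elimination order. Shrink each $V_i$ to the set $U_i$ of vertices that have a red neighbour in $U_k$ for every already processed neighbour (child) $x_k$.
\item Since there is no blue $K_{a,b}$, a child to the left (where $|U_k|\ge a$) removes at most $b-1$ vertices, and a child to the right (where $|U_k|\ge b$) removes at most $a-1$.
\item This leaves $|U_i|\ge a$ if the parent of $x_i$ lies to its right, $|U_i|\ge b$ if it lies to its left, and $|U_i|\ge 1$ at the root. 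The red copy is then embedded top-down.
\end{enumerate}

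With this lemma the induction on $k=\chi_<(H)$ is short. Split $H=H_1<H_2$ with $\chi_<(H_i)\le\lceil k/2\rceil$, and apply the lemma with $a=R_<(F,H_1)$ and $b=R_<(F,H_2)$. Inside the two sides of the resulting blue $K_{a,b}$ you find blue copies of $H_1$ and $H_2$ (otherwise there is a red $F$), and all cross edges are blue, giving a blue $H$. Finally check that $1+(n-1)(a+b-1)\le 1+(n-1)^{\lceil\log k\rceil}(m-1)$.
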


When \(\chi_<(F)=2\), the exponent in Theorem~\ref{cor:forest-ramsey} is optimal
up to an \(o(1)\) term. Indeed, Balko, Jel\'inek and
Valtr~\cite[Theorem~3.5]{BJV} (slightly improving an earlier result in
\cite{CFLS}) proved that there are arbitrarily large \(n\)-vertex ordered
matchings \(M\) with \(\chi_<(M)=2\) and
\[
    R_<(M,M)=
    \Omega\left(\frac{n^2}{\log^2 n}\right).
\]
For larger interval chromatic numbers, it would be interesting to determine the
optimal exponent in Theorem~\ref{cor:forest-ramsey}.

\begin{que}
    For each fixed \(k\geq3\), what is the smallest exponent \(c=c(k)\) such
    that every \(n\)-vertex ordered forest \(F\) with \(\chi_<(F)\leq k\)
    satisfies
    \[
        R_<(F,F)\leq n^{c+o(1)}?
    \]
\end{que}

Let \(K_{a, b}\) denote the ordered \(a \times b\) complete bipartite graph
where the part with \(a\) vertices comes entirely before the part with \(b\)
vertices in the vertex ordering. As we shall see, the main step in the proof of
Theorem~\ref{cor:forest-ramsey} is proving the case where \(H = K_{a,b}\), which
we now record separately:
\begin{thm}\label{thm:forest-bipartite}
    Let \(F\) be an \(n\)-vertex ordered forest and \(a,b\geq 1\) be integers.
    Then \(R_<(F,K_{a,b}) \leq 1+(n-1)(a+b-1)\).
\end{thm}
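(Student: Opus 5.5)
The plan is to adapt the classical greedy proof of Chvátal's bound $R(T,K_m)=(n-1)(m-1)+1$ to the ordered setting. The role of ``large minimum degree'' will be played by a robust property of the red graph that comes from having no blue $K_{a,b}$. Fix a red/blue colouring of $K_N$ on $[N]$ with $N=1+(n-1)(a+b-1)$ and no blue $K_{a,b}$. The basic observation is this: for any $a$-set $A$ lying entirely to the left of any $b$-set $B$, some edge between $A$ and $B$ is red. In particular, if $A$ is a set of $a$ consecutive vertices and $B$ is any set of $b$ vertices to the right of $A$, then some vertex of $A$ has a red neighbour in $B$. The symmetric statement also holds.

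Since adding edges to $F$ only makes the statement stronger, I would first reduce to the case where $F$ is an ordered tree. I would then prove a strengthened, inductive statement with an explicit interval structure, by induction on $n$. Cut $[N]$ into consecutive blocks. The $n-1$ ``gaps'' between consecutive vertices of $F$ each receive a stretch of $a+b-1$ positions, plus one spare vertex. The claim is then that there is a red copy of $F$ whose $i$-th vertex lies in a prescribed window determined by $i$. For the inductive step I would delete a leaf $\ell$ of $F$ with parent $p$, say with $\ell$ to the right of $p$ (the other case is symmetric). Embed $F-\ell$ by induction in a colouring where the $a+b-1$ positions allotted to the gap at $\ell$ are set aside. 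Then try to place $\ell$ in that reserved stretch, at a position compatible with the order, as a red neighbour of the image of $p$.

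The main obstacle is this last extension step. The image of $p$ is a single vertex, and nothing prevents it from being blue to the whole reserved stretch; the no-blue-$K_{a,b}$ condition only guarantees red edges from some vertex of an $a$-set. So the induction must carry flexibility: instead of a single copy, it should produce a family of copies. Concretely, I would strengthen the hypothesis to produce, for the vertex $p$, at least $a$ candidate images, with the rest of the embedding unchanged or at least compatible. I would first try obtaining these as $a$ vertices in a consecutive window of length $a+b-1$, where any of them completes the copy of $F-\ell$.

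Given such a family, let $A$ be the $a$ candidate images of $p$ and let $B$ be $b$ positions of the reserved stretch lying on the correct side of $A$. Since $A\times B$ is not entirely blue, some candidate $v\in A$ has a red neighbour $w\in B$, and we map $p\mapsto v$, $\ell\mapsto w$. The count $a+b-1$ per gap is exactly what is needed to host $a$ candidates together with $b$ targets that may overlap in one position. The bookkeeping for keeping $a$ choices alive while ordering constraints propagate along the tree is where I expect the real work. I would handle it by processing the leaves in an order such that the window for each vertex of $F$ is only narrowed once, when its last child-leaf is attached.
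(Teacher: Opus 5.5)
Your basic observation is the right engine: an $a$-set lying entirely to the left of a $b$-set spans a red edge. You also correctly see that a single embedding cannot be extended, so many candidate images are needed. But the proposal stops exactly where the proof has to happen, and the framework you set up does not close.

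In the top-down induction ``embed $F-\ell$, then attach $\ell$'', the hypothesis must supply $a$ (or $b$) images of the parent $p$ of $\ell$, each extending to a red copy of $F-\ell$. To prove that hypothesis for $F-\ell$, you would delete another leaf $\ell'$, whose parent $p'$ is in general not $p$. The hypothesis then gives you candidates only for $p'$. Restricting to copies in which $p'$ has a red neighbour in the stretch of $\ell'$ may leave a single image of $p$. So a statement about one distinguished vertex does not propagate: you need candidate sets for all vertices simultaneously, with a compatibility property.

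The paper fixes a root and a leaf-elimination order. It then defines, bottom-up, $U_x$ as the set of vertices of the window $V_x$ that have a red neighbour in $U_c$ for every child $c$ of $x$. The red copy is found greedily top-down, and no induction on $n$ is needed. Your last sentence gestures at this order, but the invariant and the count are the entire proof, and they are missing.

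The space allocation is also wrong. Reserving $a+b-1$ consecutive positions at $\ell$ gives $p$ no extra room. Since $p$ and $\ell$ need not be consecutive in the order, candidates for $p$ cannot sit in $\ell$'s stretch. Nor can $A$ and $B$ ``overlap in one position'': the no-blue-$K_{a,b}$ condition says nothing unless $A$ lies entirely to the left of $B$.

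The allocation that works is vertex-based: $|V_i|=1+(b-1)d_i^-+(a-1)d_i^+$, where $d_i^-$ and $d_i^+$ count the neighbours of $x_i$ to its left and right. Equivalently, each edge gives $a-1$ slots to its left endpoint's window and $b-1$ to its right endpoint's. With this allocation:
\begin{itemize}
\item a child to the left of its parent retains $|U|\geq a$ and disqualifies at most $b-1$ vertices of the parent's window;
\item a child to the right of its parent retains $|U|\geq b$ and disqualifies at most $a-1$;
\item the root retains $|U|\geq 1$.
\end{itemize}
For a leaf $\ell$ to the right of $p$, this means $b$ slots in $\ell$'s window and $a-1$ extra slots in $p$'s window. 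That is where your $a+b-1$ has to go.
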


Theorem~\ref{thm:forest-bipartite} is sharp. Indeed, let \(P_n\) be the monotone
path on \(n\) vertices. Partition \(K_{(n-1)(a+b-1)}\) into \(n-1\) consecutive
blocks of size \(a+b-1\) and color edges within blocks blue and those between
blocks red. This coloring contains neither a red \(P_n\) nor a blue \(K_{a,b}\),
so \(R_<(P_n,K_{a,b})=1+(n-1)(a+b-1)\).

Moreover, Theorem~\ref{thm:forest-bipartite} may be considered as a
Ramsey-goodness result for ordered graphs. A classical result of Chv\'atal
\cite{Chvatal} states that in the unordered setting, \(R(T,K_m) = (v(T)-
\nolinebreak 1)(m-1)+1\) for every (unordered) tree.
Theorem~\ref{thm:forest-bipartite} suggests studying a similar problem for
ordered graphs and \(K_{a,b}\), i.e., determining for which trees \(T\) the
bound in Theorem~\ref{thm:forest-bipartite} is tight. Ramsey goodness with
respect to a complete graph \(K_m\) (rather than a complete bipartite graph) was
studied in \cite{BP}.

\subsection{Bounds on \texorpdfstring{\(R_<(H, K_n)\)}{R<(H, K\_n)}}
Our second result concerns the off-diagonal ordered Ramsey number of a fixed
ordered graph \(H\). The standard bound \(\Rord(H,K_n) \leq \Rord(K_{|H|},K_n)
=O_H(n^{|H|-1})\) has an exponent depending on \(|H|\). We show that the
exponent can instead be chosen to depend only on the maximum degree
\(\Delta(H)\) and the interval chromatic number \(\chi_<(H)\). This is an
ordered analogue of the classical off-diagonal bound of Alon, Krivelevich and
Sudakov~\cite[Theorem~4.1]{AKS}, proved using dependent random choice.

\begin{thm}\label{thm:bounded-degree}
    For all integers \(\Delta,k\geq1\), there exists an integer \(f(\Delta,k)\)
    such that the following holds. For every ordered graph \(H\) with
    \(\Delta(H)\leq\Delta\) and \(\chi_<(H)\leq k\), there is a constant \(C_H\)
    such that, for every integer \(n\geq1\),
    \[
        \Rord(H,K_n)
        \leq C_H n^{f(\Delta,k)}.
    \]
\end{thm}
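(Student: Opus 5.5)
We argue by the standard off-diagonal strategy. Fix a red/blue colouring of \(K_N\) with \(N = C_H n^{f}\) and no blue \(K_n\). The goal is a red copy of \(H\). Write \(H\) as \(k\) consecutive independent intervals \(I_1,\dots,I_k\), each of size at most \(h=|H|\).

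The first step uses that the blue graph has no \(K_n\), so it has no independent set in the red graph of size \(n\). By the unordered bound \(R(K_t,K_n)\le n^{t-1}\) (or Turán-type counting), every set of \(M\) vertices with \(M\gg n^{t}\) spans red density at least roughly \(1-1/n\)-type guarantees. More usefully, it spans many red copies of \(K_t\), and the red graph on any such set has density bounded below in the sense of supersaturation. Concretely, I will show that in any set \(S\) of size \(|S|\ge n^{s}\), a random \(r\)-tuple is a red clique with probability at least \(n^{-O(r^2)}\). This follows by iterating the pigeonhole bound that some vertex has red degree at least \(|S|/n\) inside any set of size \(\ge n\).

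The second step is the ordered reduction. Split \([N]\) into \(k\) consecutive blocks \(B_1<\dots<B_k\) of equal size. I will embed \(I_j\) into \(B_j\), which automatically respects the order between intervals. Within an interval there are no edges, so only the relative order inside \(B_j\) matters. I handle this by finding large red-``dense'' sets \(A_j\subseteq B_j\) and then choosing vertices in increasing order. I then run a \(k\)-partite dependent random choice, in the spirit of Alon--Krivelevich--Sudakov. Pick \(T\) random vertices in \(B_1\cup\dots\cup B_k\), taking \(T/k\) from each block. For each \(j\), let \(A_j\) be the set of vertices of \(B_j\) red-adjacent to all chosen vertices outside \(B_j\). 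Using the lower bound on red density between blocks, which again follows from the absence of a blue \(K_n\) by pigeonhole on a large set, the expected sizes are \(|B_j| n^{-O(T)}\). Moreover, every \(\Delta\)-subset of \(A_i\cup A_{j}\) with few common red neighbours in \(A_\ell\) appears with small probability. After deletion, every \(\Delta\)-set of vertices in distinct blocks has at least \(h\) common red neighbours in each other block, among the surviving sets, which still have size \(\ge n^{f-O(T\Delta)}\).

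The third step is a greedy embedding. Order \(V(H)\) from left to right. With the property above, any vertex \(v\in I_j\) has at most \(\Delta\) previously embedded neighbours, lying in other blocks. Their common red neighbourhood in \(A_j\) has at least \(h\) vertices, so we can pick an unused one. The difficulty is placing it to the right of earlier embedded vertices of \(I_j\). To fix this, I will further split each \(A_j\) into \(h\) consecutive sub-blocks and require the common-neighbourhood property in each sub-block. This costs only a factor \(h\) in sizes, and that loss is absorbed into \(C_H\). The \(i\)-th vertex of \(I_j\) then goes into sub-block \(i\) of \(A_j\). The exponent used is \(f=O(T\Delta)\) with \(T=O(\Delta k)\), so it depends only on \(\Delta,k\).

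The main obstacle is the density step. In the ordered off-diagonal setting, the red density between two blocks is only guaranteed to be about \(1/n\), not constant, so dependent random choice loses factors \(n^{-T}\). I must check that the bad-event probability \((\text{threshold}/|B|)^{T}\), summed over \(N^{\Delta}\) sets, is beaten by the expected size \(N n^{-T}\). This forces the threshold for ``bad'' sets to be polynomial in \(n\) rather than \(h\). I expect to fix this by requiring common neighbourhoods of size \(\ge n^{c}\) and then applying a recursion on \(\Delta\) to extract \(h\) of them in the correct sub-blocks.
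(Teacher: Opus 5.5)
There is a genuine gap in your second step. It rests on a lower bound for the red density between two prescribed consecutive blocks $B_i<B_j$, and this does not follow from the absence of a blue $K_n$; in fact it is false.

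Colour every edge inside each block $B_1,\dots,B_k$ red and every edge between distinct blocks blue. A blue clique meets each block at most once, so there is no blue $K_n$ for $n>k$. Yet there are no red edges between blocks at all, so every $A_j$ is empty as soon as one random vertex is chosen outside $B_j$. Turán-type counting gives red density about $1/n$ inside any large set, but says nothing about a bipartition fixed in advance.

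More generally, split $[N]$ into $n-1$ consecutive intervals, with red inside and blue across. This colouring has no blue $K_n$, and it forces every red copy of a connected $H$ into a single interval of length about $N/n$. So the position and scale of your blocks would have to be found adaptively, and inside that interval you face the same problem again. Your sub-block device for ordering the vertices of $I_j$ fails for the identical reason, since it needs cross-density between every pair of sub-blocks.

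This is exactly where the ordered problem departs from the unordered one. The Alon--Krivelevich--Sudakov argument uses only global density, because no part of $H$ has to lie to the left of another. Two side remarks:
\begin{itemize}
\item Even granting cross-densities, one round of dependent random choice controls common neighbourhoods in $B_\ell$, not in the surviving set $A_\ell$. This is why Alon--Krivelevich--Sudakov proceed class by class, paying a factor $n^{D}$ per class.
\item The threshold obstacle you flagged at the end is not the real one. With a constant threshold $h$ and $T/k\geq\Delta$, the expected number of bad $\Delta$-sets is bounded by a constant depending only on $H$.
\end{itemize}

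The paper sidesteps the ordering of the coloured $K_N$ entirely. It builds an unordered graph $G$: the inclusion graph on $\bigcup_{i\le k}\binom{[M]}{(2\Delta)^{i-1}}$. Using Leeb's canonization theorem and a diagonal-selection lemma, it shows that every ordering of $V(G)$ contains $H$, so $\Rord(H,K_n)\le R(G,K_n)$. In $G$, each set of size $(2\Delta)^{i-1}$ has at most $D(\Delta,k)$ subsets among the smaller levels. The unordered class-by-class dependent random choice therefore gives $R(G,K_n)=O_G(n^{(k-1)D})$.

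Your proposal is missing an idea of this kind: either a reduction to an unordered problem, or a genuinely adaptive search for suitable intervals (as Janzer, Janzer, Magnan and Methuku do for $k=2$).
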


For \(k=2\), Theorem~\ref{thm:bounded-degree} already follows, with the much
better exponent \(\Delta+o(1)\), from a result of Janzer, Janzer, Magnan and
Methuku~\cite[Theorem~1.7]{JJMM}. Their result implies that every fixed ordered
graph \(H\) with \(\chi_<(H)\leq 2\) and \(\Delta(H)\leq\Delta\) satisfies
\[
    \Rord(H,K_n)\leq n^{\Delta+o(1)}.
\]
Their argument uses random choices to find many intervals where the vertices of
\(H\) can be embedded greedily. We take a different approach and reduce the
ordered problem to an unordered one. Namely, we construct an unordered graph
\(G\) such that every ordering of \(V(G)\) contains an ordered copy of \(H\),
giving \(\Rord(H,K_n)\leq R(G,K_n)\). To obtain such a graph \(G\), we introduce
a novel construction: The vertices of \(G\) are subsets of a ground set \([M]\),
and the fact that \(H\) appears under every ordering of \(G\) comes from Leeb's
canonization theorem for linear orders on \(\binom{[M]}{r}\). At the same time,
\(G\) is sparse in the following sense: \(V(G)\) can be partitioned into
independent sets \(V_1,\ldots,V_k\) such that every vertex of \(V_i\) has at
most \(D=D(\Delta,k)\) neighbors in \(V_1\cup\cdots\cup V_{i-1}\). This allows
us to apply the dependent random choice argument of Alon, Krivelevich and
Sudakov~\cite[Theorem~4.1]{AKS}, and it gives
\(R(G,K_n)=O_G\bigl(n^{(k-1)D}\bigr)\).

In particular, the proof of Theorem~\ref{thm:bounded-degree} gives the following
natural statement, which to the best of our knowledge is new.
\begin{thm}\label{thm:degeneracy}
    For all \(\Delta,k \geq 1\), there exists \(D = D(\Delta,k)\) such that the
    following holds. Let \(H\) be an ordered graph with \(\Delta(H)\leq\Delta\)
    and \(\chi_<(H)\leq k\). Then there is a \(D\)-degenerate unordered graph
    \(G\) such that every ordering of \(G\) contains a copy of \(H\).
\end{thm}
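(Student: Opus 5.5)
We construct \(G\) as a graph on subsets of a ground set \([M]\) and use Leeb's canonization theorem to control an arbitrary ordering of \(V(G)\).

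\textbf{Setup.} Let \(I_1<\cdots<I_k\) be the independent intervals of \(H\), and set \(h=|V(H)|\). Fix an integer \(r=r(\Delta,k)\) and a large \(M\). For each \(i\in[k]\), let \(V_i\) be a copy of \(\binom{[M]}{r}\), tagged by the index \(i\). Each \(V_i\) will be independent in \(G\), and edges will only go between different classes.

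\textbf{Step 1: the edge rule and the degeneracy bound.} For \(i<j\), we join \(A\in V_i\) to \(B\in V_j\) according to a finite list of allowed configurations describing how \(A\) and \(B\) interleave inside \([M]\), for example prescribed positions of the elements of \(A\) among those of \(B\). The configurations must ensure that each \(B\in V_j\) has at most \(D\) neighbours in \(V_1\cup\cdots\cup V_{j-1}\). This holds if every allowed configuration forces \(A\subseteq B\), or more generally forces \(A\) to be determined by \(B\) through a bounded number of choices. With such a rule, ordering \(V_1,\dots,V_k\) classwise shows that \(G\) is \(D\)-degenerate.

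\textbf{Step 2: reducing an arbitrary ordering to a canonical one.} Take any linear order \(\prec\) on \(V(G)\). By Leeb's canonization theorem, applied iteratively (or as a product version on the \(k\)-tuple of classes), we can pass to a large subset \(S\subseteq[M]\) on which \(\prec\) is canonical. This means the relative \(\prec\)-order of two sets \(A\in V_i\) and \(B\in V_j\) inside \(\binom{S}{r}\) depends only on \(i\), \(j\) and the order type of \(A\cup B\). In particular, the order restricted to each class is one of finitely many canonical orders, namely lexicographic-type orders with respect to some permutation of coordinates and possibly some reversals.

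\textbf{Step 3: embedding \(H\).} It remains to exhibit, inside \(\binom{S}{r}\), vertices \(x_v\) for \(v\in V(H)\) such that the canonical order arranges them like \(H\) and every edge of \(H\) becomes an allowed configuration. Within an interval \(I_i\) we need an increasing sequence in the canonical order of \(V_i\). Since \(I_i\) is independent, only the order matters there. For every canonical order this is easy, by choosing sets that shift along the coordinate that dominates the order. The cross-class order is also prescribed by canonicity, but possibly with the classes interleaved or reversed. We plan to absorb this by letting the map from intervals of \(H\) to classes of \(G\) depend on the canonical pattern. The bound \(\Delta(H)\le\Delta\) is used by assigning each vertex of \(H\) an \(r\)-set built from private blocks together with at most \(\Delta\) shared elements, one per incident edge. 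Then \(r\approx \Delta+O(k)\) and every edge is witnessed by a shared element in a prescribed position.

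\textbf{Main obstacle.} The hard part is the interaction between Steps 1 and 3. The edge rule must be restrictive enough to keep back-degrees bounded by \(D\), yet rich enough that for \emph{every} canonical ordering pattern the required edges of \(H\) are realized by allowed configurations. The worry is that, under some canonical order, edges of \(H\) would need configurations pointing the wrong way. We would first try a symmetric rule: include all configurations in which \(A\) and \(B\) share exactly one element, at any relative position. This keeps the degree bounded in terms of \(r\) only if we additionally restrict which coordinates of \(B\) can be shared, so the rule may need refining. We would then do a case analysis over the finitely many canonical patterns from Leeb's theorem.
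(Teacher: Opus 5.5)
Your overall framework (vertices are subsets of $[M]$, and an arbitrary ordering is controlled by Leeb canonization) matches the paper's. However, there is a genuine gap exactly where you place the main obstacle: Step~1 never fixes an edge rule, and once all classes are copies of the same $\binom{[M]}{r}$, no admissible rule exists. Your own sufficient condition ``$A\subseteq B$'' collapses to $A=B$ when $|A|=|B|$, and this is not an accident. Your rule is determined by the order type of the pair $(A,B)$, and essentially has to be, in order to survive passing to a Leeb subset.

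Take any allowed configuration with $A\neq B$. Since $|A|=|B|$, there are $x\in A\setminus B$ and $y\in B\setminus A$. Realize the configuration with consecutive elements far apart in $[M]$. Then move $x$ and $y$ independently over $t$ positions within their gaps, keeping all positions of $x$ on the correct side of all positions of $y$. Every resulting pair has the same order type, so $G$ contains $K_{t,t}$ with $t$ of order $M/r$. Since $k\binom{M}{r}=|V(G)|\geq|H|$, $M$ must grow with $|H|$, while $r$ and $D$ may depend only on $\Delta,k$. So $G$ is not $D$-degenerate. Hence bounded degeneracy forces every allowed configuration to be $A=B$, and then every component of $G$ has at most $k$ vertices. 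This rules out your ``share exactly one element'' rule no matter which coordinates you restrict, since the other $r-1$ elements of $A$ remain free. It also rules out the design $r\approx\Delta+O(k)$.

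The missing idea is to let the set sizes grow. The paper takes $V_i=\binom{[M]}{r_i}$ with $r_i=(2\Delta)^{i-1}$ and adjacency given by inclusion. Then each vertex of $V_i$ has at most $\sum_{j<i}\binom{r_i}{r_j}$ neighbours in earlier classes, and $\phi(v)$ can simply contain the images of all previously embedded neighbours.

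Step~3 is also missing the two mechanisms that make the embedding work.

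\textbf{Within-class order.} Once $\phi(v)$ is partly forced by its neighbours, you no longer control the coordinate that dominates the within-class Leeb order. The paper handles this with two ingredients. First, the growth factor $2\Delta$ ensures inherited elements make up at most half of $\phi(v)$. Second, $X_i$ is placed entirely to the left or entirely to the right of $X_1\cup\cdots\cup X_{i-1}$, according to whether the leading coordinate satisfies $p_i\leq r_i/2$. Together these guarantee that the anchor of $\phi(v)$ lies in its private block $R_v$.

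\textbf{Cross-class order.} Product canonization does not reduce matters to the classes being ``interleaved or reversed'': the comparison of $(A,i)$ and $(B,j)$ may depend on how $A$ and $B$ interleave. The paper never analyses cross-level patterns. It defines $\cA_{i,j}$ as the level-$i$ sets whose anchor lies in $X_{i,j}$, and obtains $\cA_{i,1}\prec\cdots\prec\cA_{i,k}$ from the within-level canonical order alone. Lemma~\ref{lem:diagonal-selection} then yields the permutation assigning intervals of $H$ to levels.

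Your idea of letting the interval-to-class map depend on the ordering is the right instinct, but without these ingredients it is not yet an argument.
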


We note that the analogous statement with degeneracy replaced by maximum degree
fails already for \((\Delta,k) = (2,2)\). To see this, let \(P\) be the ordered
path with vertices \(a_1 < \dots < a_m < b_1 < \dots < b_{m-1}\) and edges
\(a_ib_i, a_{i+1}b_i\), \(1 \leq i \leq m-1\). Clearly, \(\Delta(P)=2\) and
\(\chi_<(P) = 2\). Now let \(G\) be any unordered graph with maximum degree
\(D\), and let us show that if \(m > D(D-1)+1\) then there is an ordering of
\(G\) avoiding \(P\). Indeed, let \(G^2\) denote the graph on \(V(G)\) where
\(u,v\) are adjacent if and only if \(G\) has a path of length 2 between \(u\)
and \(v\). Since \(\Delta(G^2) \leq D(D-1)\), there is a proper coloring \(c\)
of \(G^2\) with \(D(D-1)+1\) colors. Now order \(V(G)\) such that the color
classes of \(c\) are intervals in the ordering, with the color labels
increasing. Suppose by contradiction that this ordering has a copy of \(P\), and
let \(v_1 < \dots < v_m\) be the vertices playing the roles of
\(a_1,\dots,a_m\), respectively, in this copy. Since \(b_i\) is a common
neighbor of \(a_i,a_{i+1}\) in \(P\), we have that \(c(v_i) \neq c(v_{i+1})\),
and hence \(c(v_i) < c(v_{i+1})\) by the definition of the ordering (for every
\(1 \leq i \leq m-1\)). But \(c(v_1) < \dots < c(v_m)\) is impossible as \(m >
D(D-1)+1\).

The proof of Theorem~\ref{thm:degeneracy} gives
\[
    D(\Delta,k)\leq k(6\Delta)^{(2\Delta)^{k-2}}.
\]
It would be interesting to determine the best possible dependence on \(\Delta\)
and \(k\).
\begin{que}
    Determine the asymptotic growth of \(D(\Delta,k)\).
\end{que}
\paragraph*{Acknowledgment.} Theorem \ref{thm:forest-bipartite} was proved
jointly with Dingding Dong and Dylan King, and we thank them for many useful
discussions on the subject. The first author is also grateful to Asaf Ferber and
Marcelo Sales for organizing the Desert Discrete Mathematics Workshop (DDMW II),
where preliminary work on this project took place. The workshop was supported by
NSF CAREER Grant DMS-2146406.

\paragraph*{AI Disclosure.} All proofs and constructions were obtained without
AI assistance, except for the example showing that Theorem~\ref{thm:degeneracy}
does not hold for maximum degree (in place of degeneracy), and the parameter
choices in the proof of Lemma~\ref{lem:universal-graph}. For the latter, we
initially left these parameters unspecified and asked GPT-5.6 Sol to determine
them. GPT-5.6 Sol was also used to polish the exposition.

\section{Proof of Theorem~\ref{cor:forest-ramsey}}\label{sec:forest-bipartite}

We will first prove Theorem~\ref{thm:forest-bipartite} and then use it to deduce
Theorem~\ref{cor:forest-ramsey}.

\begin{proof}[Proof of Theorem~\ref{thm:forest-bipartite}]
    By adding edges, we may assume that \(F\) is a tree. If \(n=1\), the result
    is immediate, so assume that \(n\geq2\). Denote the vertices of \(F\) by
    \(x_1<\dots<x_n\). Consider a red-blue edge-coloring of \(K_N\), where
    \(N=1+(n-1)(a+b-1)\). We work in the red graph \(G\), assuming that there is
    no blue ordered copy of \(K_{a,b}\).

    For each \(i\), let \(d_i^-\) and \(d_i^+\) be the numbers of neighbors of
    \(x_i\) to its left and right, respectively (in \(F\)). Partition the vertex
    set of \(K_N\) into intervals \(V_1<\dots<V_n\) with
    \[
        |V_i|=1+(b-1)d_i^-+(a-1)d_i^+.
    \]
    This is possible because
    \[
        \sum_{i=1}^n |V_i|
        =n+(a+b-2)(n-1)
        =N.
    \]

    Let \(x_{i_1},\dots,x_{i_n}\) be a \defn{leaf-elimination ordering} of
    \(F\); that is, for every \(j<n\), the vertex \(x_{i_j}\) has exactly one
    neighbor in \(\{x_{i_{j+1}},\dots,x_{i_n}\}\), which we denote by
    \(x_{p_j}\). (Namely, \(p_j\) is the unique index \(p \in
    \{i_{j+1},\dots,i_n\}\) such that \(x_{i_j}\) is adjacent to \(x_p\).) Such
    an ordering is obtained by fixing a root and listing the vertices in
    non-increasing order of their distance from it.

    \begin{claim}
        There are sets \(U_{i_j}\subseteq V_{i_j}\), \(j=1,\dots,n\), such that
        \(|U_{i_n}|\geq1\) and, for every \(j<n\),
        \[
            |U_{i_j}|\geq
            \begin{cases}
                a,& \text{ if } i_j<p_j,\\
                b,& \text{ otherwise.}
            \end{cases}
        \]
        Moreover, whenever \(k<j\) and \(x_{i_k}x_{i_j}\in E(F)\), every vertex
        of \(U_{i_j}\) has a neighbor in \(U_{i_k}\).
    \end{claim}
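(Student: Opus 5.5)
We construct the sets \(U_{i_1},U_{i_2},\dots,U_{i_n}\) one at a time, following the leaf-elimination ordering, and define each one greedily. Call \(x_{i_k}\) a \emph{child} of \(x_{i_j}\) if \(k<j\) and \(x_{i_k}x_{i_j}\in E(F)\). By the defining property of the ordering, this is equivalent to \(p_k=i_j\).

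Suppose \(U_{i_k}\) has already been defined for all \(k<j\). We set
\[
    U_{i_j}=\bigl\{v\in V_{i_j}: v \text{ has a red neighbor in } U_{i_k} \text{ for every child } x_{i_k} \text{ of } x_{i_j}\bigr\}.
\]
The ``moreover'' condition then holds by construction. It remains to bound \(|U_{i_j}|\) from below, by showing that each child removes only a few vertices of \(V_{i_j}\).

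Fix a child \(x_{i_k}\) of \(x_{i_j}\). There are two cases.
\begin{itemize}
    \item Suppose \(i_k<i_j\), so the child lies to the left. Since \(p_k=i_j>i_k\), the claim (applied inductively to \(k<j\)) gives \(|U_{i_k}|\geq a\). Fix a subset \(A\subseteq U_{i_k}\) of exactly \(a\) vertices. Every vertex of \(V_{i_j}\) with no red neighbor in \(U_{i_k}\) is joined in blue to all of \(A\). Because \(V_{i_k}<V_{i_j}\), \(b\) such vertices together with \(A\) would form a blue ordered \(K_{a,b}\). Hence at most \(b-1\) vertices of \(V_{i_j}\) are excluded because of this child.
    \item Suppose \(i_k>i_j\), so the child lies to the right. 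Then \(i_k>p_k\), so \(|U_{i_k}|\geq b\), and \(U_{i_k}\) lies entirely to the right of \(V_{i_j}\). The symmetric argument shows that at most \(a-1\) vertices of \(V_{i_j}\) are excluded.
\end{itemize}

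Now let \(c^-\) and \(c^+\) be the numbers of children of \(x_{i_j}\) to its left and right, respectively. Then
\[
    |U_{i_j}|\geq 1+(b-1)(d_{i_j}^- - c^-)+(a-1)(d_{i_j}^+ - c^+).
\]
Every neighbor of \(x_{i_j}\) is either a child or the parent \(x_{p_j}\), and the parent exists exactly when \(j<n\).
\begin{itemize}
    \item If \(j=n\), then \(d^\pm=c^\pm\), so \(|U_{i_n}|\geq1\).
    \item If \(j<n\) and \(i_j<p_j\), the parent lies to the right. Then \(d^+-c^+=1\) and \(d^--c^-=0\), so \(|U_{i_j}|\geq a\).
    \item If \(j<n\) and \(i_j>p_j\), then symmetrically \(|U_{i_j}|\geq b\).
\end{itemize}
This completes the induction.

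The only delicate point is matching the sizes correctly. The bound on \(U_{i_k}\) is \(a\) or \(b\) according to the side on which its parent \(x_{i_j}\) lies. This is exactly what is needed to forbid the matching side of a blue \(K_{a,b}\), given that the bad vertices lie on the other side. The part sizes \(|V_i|\) were chosen precisely so that these bounds balance.
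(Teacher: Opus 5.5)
Your proposal is correct and follows essentially the same argument as the paper. It uses the same greedy definition of \(U_{i_j}\) via the children of \(x_{i_j}\), the same per-child loss of at most \(b-1\) or \(a-1\) vertices (from the absence of a blue ordered \(K_{a,b}\)), and the same count against \(|V_{i_j}|\) according to whether the parent lies to the right, to the left, or does not exist (\(j=n\)).
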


    \begin{proofclaim}
        We choose the sets one by one. Since \(x_{i_1}\) is a leaf, the set
        \(U_{i_1}:=V_{i_1}\) has the required size. Now let \(2\leq j\leq n\),
        and suppose that \(U_{i_1},\dots,U_{i_{j-1}}\) have already been chosen.
        Let
        \[
            I:=\{k<j:x_{i_k}x_{i_j}\in E(F)\}\footnote{That is, \(I\) indexes
            the
            children of \(x_{i_j}\).},
        \]
        and define
        \[
            U_{i_j}:=
            \{v\in V_{i_j}:N_G(v)\cap U_{i_k}\neq\varnothing
            \text{ for every }k\in I\}.
        \]
        We lower bound \(|U_{i_j}|\) by subtracting from \(|V_{i_j}|\), for each
        \(k\in I\), the size of \(\{v\in V_{i_j}\colon N_G(v)\cap
        U_{i_k}=\varnothing\}\). We first observe that \(i_j=p_k\) whenever
        \(k\in I\).
        \begin{itemize}
            \item If \(k\in I\) and \(i_k<i_j = p_k\), the induction hypothesis
                gives
                \(|U_{i_k}|\geq a\). Thus at most \(b-1\) vertices of
                \(V_{i_j}\) have no neighbor in \(U_{i_k}\); otherwise, \(a\)
                vertices of \(U_{i_k}\) and \(b\) such vertices of \(V_{i_j}\)
                form a blue ordered \(K_{a,b}\).
            \item Otherwise, \(k\in I\) and \(i_k>i_j = p_k\). Then
                \(|U_{i_k}|\geq b\), so at most \(a-1\) vertices of \(V_{i_j}\)
                have no neighbor in \(U_{i_k}\).
        \end{itemize}

        Consider first the case \(j<n\). Then the set \(I\) (defined above)
        consists of all neighbors of \(x_{i_j}\) except its parent. Therefore,
        \[
            |U_{i_j}|\geq
            \begin{cases}
                |V_{i_j}|-(b-1)d_{i_j}^-
                -(a-1)(d_{i_j}^+-1)=a,
                & \text{if } i_j<p_j,\\
                |V_{i_j}|-(b-1)(d_{i_j}^--1)
                -(a-1)d_{i_j}^+=b,
                & \text{if } i_j>p_j.
            \end{cases}
        \]

        Now consider the case \(j=n\). Then \(I\) consists of all neighbors of
        \(x_{i_n}\), and so
        \[
            |U_{i_n}|
            \geq |V_{i_n}|-(b-1)d_{i_n}^--(a-1)d_{i_n}^+
            =1.
        \]
        The ``moreover'' part of the claim holds by construction.
    \end{proofclaim}

    We now construct a red copy of \(F\) by selecting the vertices of the copy
    in reverse leaf-elimination order. Choose any \(v_{i_n}\in U_{i_n}\). For
    \(j=n-1,\dots,1\), the vertex \(v_{p_j}\) has already been chosen, and the
    claim gives a neighbor \(v_{i_j}\in U_{i_j}\) of \(v_{p_j}\). Every edge of
    \(F\) is therefore represented by a red edge. Moreover, \(v_i\in V_i\) for
    every \(i\), so \(v_1<\dots<v_n\). Thus these vertices form a red ordered
    copy of \(F\), completing the proof.
\end{proof}

\begin{proof}[Proof of Theorem~\ref{cor:forest-ramsey}]
    By adding edges to \(F\), it suffices to prove
    \eqref{eq:forest-off-diagonal} when \(F\) is a tree. We induct\footnote{This
    induction is standard; see the proofs of~\cite[Theorem~2.1]{CFLS}
    and~\cite[Theorem~26]{BCKK}.} on \(k := \chi_<(H)\). If \(k=1\), then \(H\)
    is edgeless, so \(R_<(F,H) \leq m\), as required. Suppose now that \(k \geq
    2\) and that the statement holds for all \(H\) with \(\chi_<(H) < k\).

    Partition \(H\) into two nonempty consecutive induced subgraphs \(H_1 <
    H_2\) such that
    \[
        \chi_<(H_1),\chi_<(H_2)
        \leq \ceil{k/2} < k
    \]
    and let
    \[
        m_1 := |V(H_1)| \qquad m_2 := |V(H_2)| \qquad a := R_<(F,H_1),
        \qquad
        b := R_<(F,H_2).
    \]

    Let \(r := \ceil{\log k}\). Since \(\chi_<(H_i) \leq 2^{r-1}\), the
    induction hypothesis gives
    \begin{equation}\label{eq:forest-bipartite induction}
        a \leq 1 + (n-1)^{r-1}(m_1-1),
        \qquad
        b \leq 1 + (n-1)^{r-1}(m_2-1).
    \end{equation}

    By Theorem~\ref{thm:forest-bipartite}, every red-blue coloring of
    \(K_{1+(n-1)(a+b-1)}\) with no red copy of \(F\) contains a blue
    \(K_{a,b}\). By the definitions of \(a\) and \(b\), its first and second
    parts contain blue copies of \(H_1\) and \(H_2\), respectively. Since all
    edges between the parts are blue, these copies form a blue copy of \(H\).

    By the above, combined with \eqref{eq:forest-bipartite induction} and \(m =
    m_1+m_2\), we conclude that
    \begin{align*}
        R_<(F,H)
        &\leq 1+(n-1)(a+b-1)\\
        &\leq 1+(n-1)+(n-1)^r(m-2)\\
        &\leq 1+(n-1)^r(m-1).\qedhere
    \end{align*}
\end{proof}

\section{Proof of Theorem~\ref{thm:bounded-degree}}

The proof of Theorem~\ref{thm:bounded-degree} uses the following simple
construction.
\begin{construction}\label{con:inclusion-graph}
    Let \(\Delta,k,M\geq 1\) be integers. For \(1 \leq i \leq k\), let
    \[
        V_i=\binom{[M]}{(2\Delta)^{i-1}}.
    \]
    Let \(G=G(M)\) be the graph with vertex set \(V_1\cup\cdots\cup V_{k}\),
    where two distinct vertices are adjacent if and only if they are comparable
    under inclusion (i.e., if one contains the other).
\end{construction}

\begin{lem}\label{lem:universal-graph}
    Let \(\Delta,k\geq 1\) be integers, and let \(H\) be an ordered graph with
    \(\Delta(H)\leq\Delta\) and \(\chi_<(H)\leq k\). There is an integer \(M=M(
    |H|,\Delta,k)\) such that the graph \(G=G(M)\) from
    Construction~\ref{con:inclusion-graph} satisfies the following criteria:
    \begin{enumerate}
        \item for every \(2 \leq i \leq k\), each vertex of \(V_i\) has at most
            \[
                D = k(6\Delta)^{(2\Delta)^{k-2}}
            \]
            neighbors in \(V_1\cup\cdots\cup V_{i-1}\);
        \item every ordering of \(G\) contains an ordered copy of \(H\).
    \end{enumerate}
\end{lem}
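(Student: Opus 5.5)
Property 1 is a direct count, and I would dispose of it first. Each \(V_i\) is independent, since distinct sets of the same size are incomparable under inclusion. A vertex \(S\in V_i\) has \(|S|=s=(2\Delta)^{i-1}\), and its neighbors in \(V_j\) for \(j<i\) are exactly the \(t\)-subsets of \(S\), where \(t=(2\Delta)^{j-1}\). There are \(\binom{s}{t}\le (es/t)^t=(e(2\Delta)^{i-j})^{(2\Delta)^{j-1}}\) of them. For \(j=i-1\) this is at most \((6\Delta)^{(2\Delta)^{i-2}}\le(6\Delta)^{(2\Delta)^{k-2}}\), and the smaller \(j\) contribute no more. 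Summing over the fewer than \(k\) values of \(j\) gives the bound \(D=k(6\Delta)^{(2\Delta)^{k-2}}\), where the \(j<i-1\) terms are checked by the same estimate. So the real content is property 2.

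For property 2 I would canonize an arbitrary linear order \(\prec\) on \(V(G)\). Fix a large \(L=L(|H|,\Delta,k)\) and take \(M\) huge. The order type of a tuple of sets \((A_1,\dots,A_q)\) drawn from the levels is the pattern recording how their elements interleave in \([M]\). For each pair of levels \((i,j)\), including \(i=j\), whether \(A\prec B\) for \(A\in V_i\), \(B\in V_j\) is a finite coloring of such order types. By Leeb's canonization theorem, applied level by level and then to pairs of levels using the hypergraph Ramsey theorem on the finitely many order types, I would pass to \(X\subseteq[M]\) with \(|X|=L\). On \(X\), whether \(A\prec B\) depends only on the levels of \(A,B\) and the order type of \((A,B)\). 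Leeb's theorem says the within-level order is then one of the canonical orders: a lexicographic-type comparison that reads the positions of \(A\triangle B\) with a fixed priority on coordinates and a fixed direction. I expect the between-level comparisons to take a similar form after a further Ramsey refinement.

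Next I embed \(H\). Let the intervals of \(H\) be \(I_1<\dots<I_k\). I would assign the vertices of \(I_t\) to level \(V_{\sigma(t)}\) for a permutation \(\sigma\) to be chosen from the canonical data, ideally so that \(\prec\) places level \(\sigma(1)\) before level \(\sigma(2)\), and so on, on suitably separated families of sets. I then build the sets bottom-up in level index. For a vertex \(v\) at level \(i\) with lower-level neighbors \(u_1,\dots,u_p\), where \(p\le\Delta\), the set for \(v\) is the union of their sets together with fresh elements. Since \(p\cdot(2\Delta)^{i-2}\le(2\Delta)^{i-1}/2\), at least half of the set is fresh. These fresh elements are free parameters, and I would use them for two purposes. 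First, they destroy unwanted inclusions, so that non-adjacent vertices receive incomparable sets. Second, they realize the desired order: by placing the fresh elements of different vertices in disjoint, widely separated blocks of \(X\), I can arrange that the canonical comparison is decided by these blocks. Then within each \(I_t\) the order of the \(v\)'s, and between parts the order of levels, is dictated by the order of the blocks, whose direction I can reverse if the canonical order is decreasing.

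The main obstacle is this last step. The canonical order might compare \(A\) and \(B\) through coordinates inherited from their lower neighbors rather than through the fresh elements. The between-level comparisons might also interleave levels, so that no single permutation \(\sigma\) makes the parts come out as consecutive intervals. My first attempt would be to place all inherited elements of level-\(i\) sets in a region of \(X\) that the relevant canonical order reads only after the fresh block, choosing block positions level by level according to the priority list given by Leeb's theorem. Because the ratio \(2\Delta\) leaves more fresh than inherited elements, there is room to make the fresh coordinates decisive. If interleaving between levels cannot be avoided, I would refine the choice of \(\sigma\) to follow the canonical between-level rule itself.
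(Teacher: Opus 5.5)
\paragraph{There is a genuine gap.} Your Item~1 count is fine. The skeleton of Item~2 also matches the paper: Leeb canonization on each level \(\binom{X}{r_i}\), sending each interval \(I_t\) to one level, and building \(\phi(v)\) bottom-up as the union of its lower neighbours' images plus at least \(r_i/2\) fresh elements. But the step you flag as ``the main obstacle'' is where the actual proof lies, and you leave it open. ``I expect the between-level comparisons to take a similar form after a further Ramsey refinement'' and ``refine the choice of \(\sigma\) to follow the canonical between-level rule'' are hopes, not arguments. Two ideas are missing.

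First, within a level only the first coordinate of Leeb's canonical form matters. Let \(p_i=\sigma^{(i)}(1)\) and let the anchor \(a(S)\) be the \(p_i\)-th smallest element of \(S\). Then \(\varepsilon_i a(S)<\varepsilon_i a(S')\) implies \(S\prec S'\). Your idea of putting inherited elements where the order reads them ``after the fresh block'' points the right way. However, the canonical order reads coordinates by rank inside the set, not by region of \([M]\). What you need is that rank \(p_i\) of \(\phi(v)\) is a fresh element. To arrange this, choose the region \(X_i\) for level-\(i\) fresh elements entirely to the left of \(X_1\cup\cdots\cup X_{i-1}\) if \(p_i\le r_i/2\), and entirely to the right otherwise; assign the regions in the order \(i=k,\dots,1\). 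Since fresh elements form at least half of \(\phi(v)\), they then fill at least the \(r_i/2\) lowest (resp.\ highest) ranks, which include \(p_i\).

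Second, no canonization across levels is needed. Split \(X_i\) into blocks \(X_{i,1},\dots,X_{i,k}\), numbered in the direction given by \(\varepsilon_i\). Let \(\mathcal{A}_{i,j}\) be the \(r_i\)-subsets of \(X\) whose anchor lies in \(X_{i,j}\), so \(\mathcal{A}_{i,1}\prec\cdots\prec\mathcal{A}_{i,k}\) for every \(i\). Now make a greedy choice: at step \(j\), take the unused \(i\) for which the \(\prec\)-maximum of \(\mathcal{A}_{i,j}\) is smallest. This yields a permutation \(\tau\) with \(\mathcal{A}_{\tau(1),1}\prec\cdots\prec\mathcal{A}_{\tau(k),k}\), so interleaving of levels never has to be analysed. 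Then embed \(I_j\) into \(\mathcal{A}_{\tau(j),j}\), putting the fresh elements of its vertices into disjoint sub-intervals of \(X_{\tau(j),j}\) ordered according to \(\varepsilon_{\tau(j)}\). The anchor property gives the order inside \(I_j\), and the diagonal chain gives the order between parts.

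Your pair-of-levels Ramsey refinement is never turned into a description of the cross-level order or a choice of \(\sigma\). Even if it were, you would still need the anchor control above to know which families your constructed sets fall into.

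Finally, drop the aim of giving non-adjacent vertices incomparable sets. Copies need not be induced. Moreover, with your union construction this is impossible in general: a path \(uwv\) in \(H\) with increasing levels forces \(\phi(u)\subseteq\phi(w)\subseteq\phi(v)\).
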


Theorem~\ref{thm:bounded-degree} follows easily from this lemma.

\begin{proof}[Proof of Theorem~\ref{thm:bounded-degree}]
    Let \(M=M(|H|,\Delta,k)\) be as in Lemma~\ref{lem:universal-graph}, and set
    \(G=G(M)\). By Item~2, \(\Rord(H,K_n)\leq R(G,K_n)\). Applying the argument
    of Alon, Krivelevich and Sudakov~\cite[Theorem~4.1]{AKS} with the degree
    bound in Item~1 gives\footnote{Their inductive argument requires only a
    bound \(D\) on the number of neighbors in earlier classes, rather than on
    the total degree. Each additional class costs a factor of \(O_G(n^D)\).}
    \[
        R(G,K_n)\leq
        C_H n^{(k-1)D} =
        C_H n^{k(k-1)(6\Delta)^{(2\Delta)^{k-2}}}.\qedhere
    \]
\end{proof}

Thus, it remains to prove Lemma~\ref{lem:universal-graph}. The main difficulty
is Item~2: given an arbitrary ordering, we need to find a subset on which the
ordering has a \emph{canonical} form. For this, we use the following Ramsey-type
result, known as Leeb's canonization theorem (see \cite[Theorem~1.2 and
Lemma~3.8]{RRSSS}).\footnote{Theorem~1.2 of~\cite{RRSSS} gives a subset on which
the ordering satisfies a certain consistency property, while Lemma~3.8 provides
the characterization we use here.}
\begin{thm}\label{thm:leeb-canonization}
    For all positive integers \(r,t\), there exists an integer \(M_0\) such
    that, for every \(M\geq M_0\) and every linear order \(\prec\) on
    \(\binom{[M]}{r}\), there are a set \(X\subseteq[M]\) of size \(t\), a sign
    vector \((\varepsilon_1,\ldots,\varepsilon_r)\in\{-1,1\}^r\), and a
    permutation \(\sigma\) of \([r]\) such that
    \begin{equation}\label{eq:Leeb}
        A\prec B
        \quad\Longleftrightarrow\quad
        (\varepsilon_{\sigma(1)}a_{\sigma(1)},\ldots,
        \varepsilon_{\sigma(r)}a_{\sigma(r)})
        <_{\mathrm{lex}}
        (\varepsilon_{\sigma(1)}b_{\sigma(1)},\ldots,
        \varepsilon_{\sigma(r)}b_{\sigma(r)})
    \end{equation}
    for all \(A=\{a_1<\cdots<a_r\}\) and \(B=\{b_1<\cdots<b_r\}\) in
    \(\binom{X}{r}\), where \(<_{\mathrm{lex}}\) denotes the lexicographic order
    on \(\mathbb{Z}^r\). In this case, we say that the order on \(\binom{X}{r}\)
    is \defn{canonical}.
\end{thm}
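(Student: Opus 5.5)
The plan is the standard one: a Ramsey-homogenisation step reduces the order to finitely many \emph{type} decisions, followed by a combinatorial analysis showing that a consistent family of type decisions must be a signed, permuted lexicographic order.

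\textbf{Homogenisation.} For two sets \(A=\{a_1<\dots<a_r\}\) and \(B=\{b_1<\dots<b_r\}\), define their \emph{type} \(\tau(A,B)\) to be the order pattern of the sequence \((a_1,\dots,a_r,b_1,\dots,b_r)\). This records which \(a_i\) equal which \(b_j\) and how the remaining elements interleave. There are finitely many types. If \(A\cup B=\{c_1<\dots<c_s\}\) with \(r<s\le 2r\), then the pair \((A,B)\) is recovered from the \(s\)-set \(A\cup B\) together with \(\tau\).

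For each type \(\tau\) on \(s\) points, colour each \(s\)-subset \(S\) of \([M]\) by the truth value of \(A\prec B\), where \((A,B)\) is the pair of type \(\tau\) realised on \(S\). Apply Ramsey's theorem successively, once for each of the finitely many types. This yields a set \(Y\subseteq[M]\) of size \(T\), with \(T\) arbitrarily large once \(M_0\) is large. On \(Y\), whether \(A\prec B\) depends only on \(\tau(A,B)\); write \(f(\tau)\in\{<,>\}\) for this common value. I will then pass to \(X\subseteq Y\) consisting of every \((2r+1)\)-st element of \(Y\), with \(|X|=t\). This leaves room for auxiliary elements of \(Y\) between any two consecutive elements of \(X\), and these auxiliary sets can be used in transitivity arguments. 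Their comparisons are also governed by \(f\).

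\textbf{Single coordinates.} Consider types in which \(A\) and \(B\) differ only in coordinate \(i\), say \(a_i<b_i\) with all other coordinates equal. Any two such configurations can be linked by a chain of single-coordinate moves inside \(Y\): if \(a_i<b_i<b'_i\), then \(A<B\) and \(B<B'\) give \(A<B'\). Transitivity therefore forces all these types to share one value. I will define \(\varepsilon_i=1\) if this value is \(A\prec B\), and \(\varepsilon_i=-1\) otherwise. Antisymmetry of \(\prec\) means that \(f\) of the reversed type is the opposite value, which keeps this consistent.

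\textbf{Dominance between coordinates.} For \(i\neq j\), consider pairs that differ in coordinates \(i\) and \(j\) only, and in which the signed changes disagree, meaning that moving \(i\) alone would give \(A\prec B\) while moving \(j\) alone would give \(B\prec A\). In such a pair exactly one coordinate ``wins''. Say \(i\) dominates \(j\) if coordinate \(i\) wins. The main obstacle is to show that this notion is well defined, i.e.\ independent of the interleaving type. I would first establish this by transitivity chains through auxiliary sets in \(Y\). The idea is to interpolate between two different interleavings using intermediate sets that move one coordinate at a time; this is where the spacing of \(X\) inside \(Y\) is used. A contradiction should arise if two interleavings of the same coordinates disagreed. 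Next I would show, again via short transitivity chains, that dominance is transitive and total. Totality holds because exactly one coordinate wins in every configuration, and transitivity uses three-coordinate configurations.

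The resulting linear order on \([r]\) defines the permutation \(\sigma\). Finally, for general \(A,B\in\binom{X}{r}\), let \(\ell\) be the \(\sigma\)-first coordinate in which \(A\) and \(B\) differ. I would connect \(A\) to \(B\) by a chain that changes coordinates one or two at a time through auxiliary sets, so that the first move in the chain settles the outcome and no later move can overturn it. This shows that \(A\prec B\) holds exactly when \(\varepsilon_\ell a_\ell<\varepsilon_\ell b_\ell\), which is precisely \eqref{eq:Leeb}.
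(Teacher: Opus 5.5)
Your plan is correct and follows the route the paper relies on. The paper does not prove this theorem itself but quotes it from \cite{RRSSS}, split exactly as you split it: a Ramsey-type step (Theorem~1.2 there) produces a subset on which the order is consistent, essentially meaning that whether \(A\prec B\) depends only on the order type of \((A,B)\); a characterisation (Lemma~3.8 there) then identifies such consistent orders as signed, permuted lexicographic orders, which is what your sign/dominance analysis reconstructs. The parts you only sketch carry the real work of that characterisation, and they do go through, but your description of the final chain (``the first move settles the outcome'') should be tightened: a transitivity chain proves \(A\prec B\) only if every step goes up, so coordinate \(\ell\) cannot be moved all at once; its move must be split into pieces through auxiliary points strictly between \(a_\ell\) and \(b_\ell\), one piece bundled with each coordinate whose own move goes down, with the moves ordered so that every intermediate set stays sorted.
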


Applying Theorem~\ref{thm:leeb-canonization} successively, we may restrict to
\(X\subseteq[M]\) such that the order on \(\binom{X}{(2\Delta)^i}\) is canonical
for every \(0\leq i<k\). The following lemma is the connection between sets of
different sizes. Here and throughout, for nonempty subsets \(A,B\) of a linearly
ordered set under \(\prec\), we write \(A \prec B\) if every element of \(A\) is
smaller than every element of \(B\).\footnote{\(A\prec B\) is vacuously true if
either set is empty.}

\begin{lem}\label{lem:diagonal-selection}
    Let \(\prec\) be a linear order on a set \(\Omega\), and let \(\cA_{i, j}
    \subset \Omega\) be finite for \(i,j\in[k]\). Suppose that
    \[
        \cA_{i,1}\prec \cA_{i,2}\prec\cdots\prec \cA_{i,k}
    \]
    for every \(i\in[k]\). Then there is a permutation \(\tau\) of \([k]\) such
    that
    \[
        \cA_{\tau(1),1}\prec \cA_{\tau(2),2}
        \prec\cdots\prec \cA_{\tau(k),k}.
    \]
\end{lem}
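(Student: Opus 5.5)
We argue by induction on \(k\). The case \(k=1\) is trivial, with \(\tau\) the identity. The plan is to choose \(\tau(1)\) greedily and recurse on the remaining rows and columns.

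\emph{Choosing the first row.} Among the indices \(i\) with \(\cA_{i,1}\neq\varnothing\), pick \(\tau(1)=i^\ast\) to be the one for which \(\max\cA_{i,1}\) is smallest under \(\prec\). If every \(\cA_{i,1}\) is empty, let \(i^\ast\) be arbitrary. Since we only compare sets via ``every element of one precedes every element of the other'' and the sets are finite, these maxima exist. With this choice, for every other row \(i\neq i^\ast\) we have \(\max\cA_{i^\ast,1}\preceq\max\cA_{i,1}\) whenever \(\cA_{i,1}\) is nonempty.

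\emph{Comparing with the later columns.} For any \(i\) and any \(j\geq2\), we have \(\cA_{i,1}\prec\cA_{i,j}\). So if \(\cA_{i,1}\) and \(\cA_{i,j}\) are both nonempty, every element of \(\cA_{i,j}\) exceeds \(\max\cA_{i,1}\succeq\max\cA_{i^\ast,1}\), and hence \(\cA_{i^\ast,1}\prec\cA_{i,j}\). The edge case is a row \(i\) with \(\cA_{i,1}=\varnothing\) but \(\cA_{i,j}\neq\varnothing\); here the hypothesis gives no control over \(\cA_{i,j}\). To handle this, we strengthen the choice: \(i^\ast\) should be a row whose first set is empty whenever such a row exists, since \(\cA_{i^\ast,1}=\varnothing\) makes \(\cA_{i^\ast,1}\prec\text{anything}\) vacuously true. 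Otherwise all first sets are nonempty, and we take the minimal maximum as above.

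\emph{Recursing.} The remaining \((k-1)\times(k-1)\) array has rows \([k]\setminus\{i^\ast\}\) and columns \(2,\dots,k\). Each of its rows still satisfies the chain hypothesis, so by induction there is a bijection \(\tau\) from columns \(2,\dots,k\) onto these rows with \(\cA_{\tau(2),2}\prec\cdots\prec\cA_{\tau(k),k}\). By the previous step, \(\cA_{i^\ast,1}\prec\cA_{\tau(2),2}\) (in fact \(\cA_{i^\ast,1}\) precedes every set in the subarray). Since \(\prec\) between sets is not transitive when empty sets are involved, the full chain in the statement should be read as the consecutive relations, which is how it is used. This completes the induction.

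The main obstacle is the bookkeeping around empty sets: the vacuous convention breaks transitivity, so the choice of \(i^\ast\) has to prefer rows with an empty first set. Once empty sets are dealt with, the argument is the same as a greedy "smallest maximum first" matching argument.
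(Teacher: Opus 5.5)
Your argument is correct and is essentially the paper's proof. The paper runs the same greedy rule iteratively: at step \(j\) it chooses an unused row whose \(j\)-th set has the smallest maximum, under the convention that an empty set has maximum \(-\infty\) (exactly your preference for rows with an empty first set), and derives \(\cA_{\tau(j),j}\prec\cA_{\tau(j+1),j+1}\) from the greedy choice together with \(\cA_{\tau(j+1),j}\prec\cA_{\tau(j+1),j+1}\), just as your recursion only needs the case \(j=2\) of your comparison step. One small correction: the paper later uses the conclusion for all pairs of columns, not only consecutive ones, but this is harmless there because every \(\cA_{i,j}\) in that application is nonempty.
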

\begin{proof}
    For a set \(\cA\), let \(\ell(\cA)\) (resp. \(r(\cA)\)) denote its smallest
    (resp. largest) element, writing \(\infty\) (resp. \(-\infty\)) when \(\cA\)
    is empty. We choose \(\tau(1),\dots,\tau(k)\) one by one, in order. For each
    \(j=1,\ldots,k\), choose \(\tau(j)\) among the unused indices (i.e., among
    all indices not belonging to \(\{\tau(1),\dots,\tau(j-1)\}\)) such that
    \(r(\cA_{\tau(j),j})\) is smallest. For every \(1\leq j<k\), the index
    \(\tau(j+1)\) was available at step \(j\), so
    \[
        r(\cA_{\tau(j),j}) \preceq r(\cA_{\tau(j+1),j}).
    \]
    Also, by the assumption of the lemma, we have
    \[
        r(\cA_{\tau(j+1),j}) \prec \ell(\cA_{\tau(j+1),j+1}).
    \]
    This proves that
    \[
        \cA_{\tau(j),j}
        \prec\cA_{\tau(j+1),j+1}.\qedhere
    \]
\end{proof}
\begin{proof}[Proof of Lemma~\ref{lem:universal-graph}]
    To ease the notation, let us put
    \[
        r_i := (2\Delta)^{i-1}
    \]
    (for \(1 \leq i \leq k\)), so that \(V_i = \binom{[M]}{r_i}\). Note that
    \(r_i = 2\Delta r_{i-1}\) for \(2 \leq i \leq k\). For Item~1, the classes
    \(V_i\) are independent, and each \(S\in V_i\), \(2 \leq i \leq k\), has at
    most
    \[
        \sum_{j=1}^{i-1}\binom{r_i}{r_j}
        \leq (i-1)\binom{r_i}{r_{i-1}}
        \leq (i-1)(2e\Delta)^{(2\Delta)^{i-2}}
        \leq k(6\Delta)^{(2\Delta)^{k-2}}
    \]
    neighbors in earlier classes.

    For Item~2, fix any linear order \(\prec\) on \(V(G)\). Applying
    Theorem~\ref{thm:leeb-canonization} successively,\footnote{Here we choose
    \(M=M(|H|,\Delta,k)\) sufficiently large.} we obtain \(X\subseteq[M]\) with
    \[
        |X|=k^2|H|(2\Delta)^{k-1}
    \]
    such that, for every \(1 \leq i \leq k\), the order induced by \(\prec\) on
    \(\binom{X}{(2\Delta)^{i-1}}\) is canonical. This means that \eqref{eq:Leeb}
    holds with \(r = r_i\) and for some permutation \(\sigma^{(i)}\) of
    \([r_i]\) and a vector \(\varepsilon^{(i)} \in \{-1,1\}^{r_i}\). Let \(p_i
    := \sigma^{(i)}(1)\) and \(\epsilon_i := \epsilon^{(i)}_{p_i} \in
    \{-1,1\}\). For \(S\in\binom{X}{(2\Delta)^{i-1}}\), let its \defn{anchor}
    \(a(S)\)\footnote{We suppress \(i\) from the notation, since it is
    determined by \(|S|\).} be its \(p_i\)-th smallest element. The key property
    is that for \(S,S'\in\binom{X}{(2\Delta)^{i-1}}\),
    \begin{equation}\label{eq:Leeb order}
        \epsilon_i a(S)<\epsilon_i a(S')
        \quad\Longrightarrow\quad
        S\prec S'.
    \end{equation}
    This holds by considering the first coordinate in \eqref{eq:Leeb}.

    Partition \(X\) into \(k\) consecutive intervals, each of size
    \(k|H|(2\Delta)^{k-1}\), and label them \(X_1,\ldots,X_k\) so that, for each
    \(2\leq i\leq k\),
    \begin{equation}\label{eq:interval-placement}
        X_i<X_1\cup\cdots\cup X_{i-1}
        \text{ if }p_i\leq\frac{r_i}{2},
        \qquad
        X_i>X_1\cup\cdots\cup X_{i-1} \text{ otherwise.}
    \end{equation}
    We may arrange this by choosing \(X_1,\dots,X_k\) in reverse order
    \(i=k,\dots,1\), at each step giving the largest available label to the
    leftmost unlabeled interval if \(p_i\leq r_i/2\), and to the rightmost
    otherwise. Finally, partition each \(X_i\) into \(k\) consecutive intervals
    \(X_{i,1},\ldots,X_{i,k}\), each of size \(|H|(2\Delta)^{k-1}\), numbering
    them from left to right if \(\epsilon_i=1\), and right to left if
    \(\epsilon_i=-1\).

    For \(i,j\in[k]\), let
    \[
        \cA_{i,j}
        =\left\{S\in\binom{X}{r_i}:
        a(S)\in X_{i, j}\right\}.
    \]
    By how the \(X_{i,1},\ldots,X_{i,k}\) were numbered and by the key property
    \eqref{eq:Leeb order}, for every \(i\in[k]\) we have
    \[
        \cA_{i,1}\prec\cA_{i,2}\prec\cdots\prec\cA_{i,k}.
    \]
    By Lemma~\ref{lem:diagonal-selection}, we obtain a permutation \(\tau\) of
    \([k]\) such that
    \begin{equation}\label{eq:diagonal-order}
        \cA_{\tau(1),1}\prec\cA_{\tau(2),2}
        \prec\cdots\prec\cA_{\tau(k),k}.
    \end{equation}

    It will be convenient to work with the inverse of \(\tau\), so put \(\pi :=
    \tau^{-1}\). Then \eqref{eq:diagonal-order} translates to having
    \begin{equation}\label{eq:diagonal-order 2}
        \cA_{i,\pi(i)} \prec \cA_{j,\pi(j)}
        \text{ whenever } \pi(i) < \pi(j).
    \end{equation}

    We are now ready to embed \(H\). Let \(I_1<\cdots<I_k\) be independent
    consecutive intervals partitioning \(V(H)\), allowing empty parts. We
    construct an embedding\footnote{That is, an injective order-preserving graph
    homomorphism.} \(\phi\colon V(H)\to V(G)\) by induction on \(i\), where in
    the \(i\)th step we embed \(I_{\pi(i)}\) into \(\cA_{i,\pi(i)}\). Note that
    \(\cA_{i,\pi(i)} \subseteq \binom{X}{r_i}\). This means that at step \(i\)
    we use sets of size \(r_i\). The embedding idea is simple: edges of \(G\)
    are given by inclusion. Thus, we need to make sure that the image
    \(\phi(v)\) of a vertex \(v\) contains the images of all already-embedded
    neighbours of \(v\). This is why we embed in increasing order of set size
    (i.e., we use sets of size \(r_i\) in the \(i\)th step). In addition to
    these ``mandatory" elements, we also add additional elements to \(\phi(v)\)
    to guarantee that the internal order inside each \(I_i\) is preserved.

    At each step, we maintain the following invariants for the vertices already
    embedded:
    \begin{enumerate}
        \item[(a)] For every \(i \in [k]\) and
            \(v\in I_{\pi(i)}\), it holds that \(\phi(v)\subseteq
            X_1\cup\cdots\cup X_i\) and \(\phi(v)\in\cA_{i,\pi(i)}\);
        \item[(b)] whenever \(v<w\) belong to the same interval \(I_i\),
            \(\phi(v)\prec\phi(w)\);
        \item[(c)] for all \(1 \leq j < i \leq k\) and every edge \(uv\in E(H)\)
            with \(u\in I_j\) and \(v\in I_i\), it holds that
            \(\phi(u)\phi(v)\in E(G)\).
    \end{enumerate}
    Once these properties are established, Item~(a) and
    \eqref{eq:diagonal-order 2} show that the order \(I_1<\cdots<I_k\) is
    preserved by \(\phi\), while
    Item~(b) preserves the order within each \(I_i\). Together, these imply that
    \(\phi\) preserves the vertex order and is injective. Since each \(I_i\) is
    independent, Item~(c) says that \(\phi\) preserves every edge, completing
    the proof.

    First, for each \(i\in[k]\), choose pairwise disjoint intervals
    \(R_v\subseteq X_{i,\pi(i)}\), \(v\in I_{\pi(i)}\), each of size \(r_i\),
    such that \(\epsilon_i R_v<\epsilon_i R_w\) whenever \(v<w\) in
    \(I_{\pi(i)}\) (namely, if \(\epsilon_i = 1\) then the order among the
    intervals \(R_v\) matches the order on \(I_{\pi(i)}\), and if \(\epsilon_i =
    -1\) then the order is reversed). Note that we can choose such disjoint
    intervals \(R_v \subseteq X_{i,\pi(i)}\) because
    \[
        |I_{\pi(i)}| r_i = |I_{\pi(i)}|(2\Delta)^{i-1}
        \leq |H|(2\Delta)^{k-1}
        =|X_{i,\pi(i)}|.
    \]

    Now let \(i\in[k]\), and suppose that \(I_{\pi(j)}\) has been embedded for
    every \(1 \leq j < i\). We embed \(I_{\pi(i)}\). For each \(v \in
    I_{\pi(i)}\), let
    \[
        U_v=\bigcup_{\substack{j<i, \;
                              u\in I_{\pi(j)} \\
                              uv\in E(H)}}\phi(u)
    \]
    be the union of the images of its already embedded neighbours. By induction,
    \[
        |U_v|\leq
        \Delta \cdot r_{i-1} \leq
        \frac{1}{2}r_i.
    \]
    (Here, for \(i=1\) we set \(r_0 := 0\).) So we may define \(\phi(v)\) by
    adding \(r_i-|U_v| \ge r_i/2\) points of \(R_v\) to \(U_v\). This is
    possible as \(|R_v| = r_i\), and guarantees that \(|\phi(v)| = r_i\).

    It remains to verify the three invariants. For Item~(a), we have
    \(R_v\subseteq X_{i,\pi(i)} \subset X_{i}\) and, by Item~(a) at steps
    \(1,\ldots,i-1\), we have \(U_v\subseteq X_1\cup\cdots\cup X_{i-1}\).
    Therefore, \(R_v \cap U_v = \varnothing\) and by construction
    \[
        \phi(v)\subseteq X_1\cup\cdots\cup X_i,
        \qquad
        |\phi(v)|=r_i.
    \]
    It remains to verify that \(a(\phi(v))\in X_{i,\pi(i)}\) (so that
    \(\phi(v)\in\cA_{i,\pi(i)}\)). This follows from the following claim
    (because \(R_v \subset X_{i,\pi(i)}\)).

    \begin{claim}\label{claim:anchor-location}
        \(a(\phi(v))\in R_v\).
    \end{claim}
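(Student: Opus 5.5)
We need to show that the \(p_i\)-th smallest element of \(\phi(v)=U_v\cup(\phi(v)\cap R_v)\) lies in \(R_v\). The plan is to use the placement rule \eqref{eq:interval-placement} together with the size bound \(|U_v|\leq r_i/2\). We may assume \(i\geq2\): for \(i=1\) we have \(U_v=\varnothing\), so \(\phi(v)\subseteq R_v\) and there is nothing to prove.

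By invariant (a) at earlier steps, \(U_v\subseteq X_1\cup\cdots\cup X_{i-1}\). Also \(R_v\subseteq X_i\). By \eqref{eq:interval-placement}, either \(X_i<X_1\cup\cdots\cup X_{i-1}\) (when \(p_i\leq r_i/2\)) or \(X_i>X_1\cup\cdots\cup X_{i-1}\) (otherwise). So in the sorted list of the \(r_i\) elements of \(\phi(v)\), the elements of \(\phi(v)\cap R_v\) form either an initial segment or a final segment. We will check each case, using that this segment has length \(r_i-|U_v|\geq r_i/2\).

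\textbf{Case \(p_i\leq r_i/2\).} Here every element of \(R_v\) lies below every element of \(U_v\). The smallest \(r_i-|U_v|\geq r_i/2\) elements of \(\phi(v)\) are therefore exactly the chosen points of \(R_v\). Since \(p_i\leq r_i/2\leq r_i-|U_v|\), the \(p_i\)-th smallest element is one of these points, so \(a(\phi(v))\in R_v\).

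\textbf{Case \(p_i>r_i/2\).} Here every element of \(R_v\) lies above \(U_v\). The elements of \(R_v\cap\phi(v)\) occupy positions \(|U_v|+1,\dots,r_i\) in the sorted order. Since \(p_i>r_i/2\geq |U_v|\) and \(p_i\leq r_i\), we get \(p_i\geq |U_v|+1\), so again the \(p_i\)-th smallest element lies in \(R_v\).

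The only delicate point is the boundary of the inequalities. The claimed bound \(|U_v|\leq\Delta r_{i-1}=r_i/2\) was established just before the claim, and the thresholds in \eqref{eq:interval-placement} were chosen to match it: we need \(p_i\leq r_i-|U_v|\) in the first case and \(p_i>|U_v|\) in the second, and both follow from \(|U_v|\leq r_i/2\) together with the strict/non-strict split at \(r_i/2\). So I do not expect a real obstacle; the main care is in confirming that the interval \(R_v\) is disjoint from and on the correct side of \(U_v\), which follows from invariant (a).
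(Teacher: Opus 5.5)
Your proof is correct and follows the paper's argument: invariant (a) together with \eqref{eq:interval-placement} places \(R_v\) entirely below or entirely above \(U_v\), and then \(|\phi(v)\cap R_v| = r_i-|U_v|\geq r_i/2\) forces the \(p_i\)-th smallest element of \(\phi(v)\) into \(R_v\). The paper leaves the case \(p_i>r_i/2\) as ``analogous''; you write it out, correctly obtaining \(p_i\geq |U_v|+1\) from \(p_i>r_i/2\geq|U_v|\) and integrality.
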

    \begin{proofclaim}
        If \(i = 1\) then \(\phi(v) \subseteq R_v\) so the claim is immediate.
        Now let \(i \geq 2\). Since \(R_v\subseteq X_i\) and \(U_v\subseteq
        X_1\cup\cdots\cup X_{i-1}\), \eqref{eq:interval-placement} gives
        \[
            R_v<U_v
            \quad
            \text{if }p_i\leq\frac{r_i}{2},
            \qquad
            R_v>U_v
            \quad\text{otherwise}.
        \]
        In the first case, \(a(\phi(v))\) is among the \(r_i/2\) smallest
        elements of \(\phi(v)\) and \(\phi(v)\cap R_v\) contains the
        \(|\phi(v)\cap R_v|\) smallest elements of \(\phi(v)\). Since
        \[
            |\phi(v)\cap R_v|\geq\frac{r_i}{2},
        \]
        it follows that \(a(\phi(v))\in R_v\). The other case is analogous.
    \end{proofclaim}

    Item~(b) follows from Claim~\ref{claim:anchor-location}, the ordering of the
    \(\{R_v \colon v \in I_{\pi(i)}\}\), and the canonical order on
    \(\binom{X}{r_i}\). Indeed, consider any \(v,w \in I_{\pi(i)}\) with \(v <
    w\). Suppose first that \(\epsilon_i = 1\). Then \(R_v < R_w\). Also, as
    \(a(\phi(v))\in R_v\) and \(a(\phi(w))\in R_w\), \eqref{eq:Leeb order} gives
    \(\phi(v) \prec \phi(w)\). The case that \(\epsilon_i = -1\) is analogous
    (with \(R_v > R_w\)).

    Finally, Item~(c) holds because \(\phi(v)\) contains \(\phi(u)\) for every
    previously embedded neighbour \(u\) of \(v\), so \(\phi(v) \phi(u) \in
    E(G)\) according to the adjacency rule in \(G\).
\end{proof}

\fontsize{11pt}{12pt}
\selectfont

\hypersetup{linkcolor={red!70!black}}
\setlength{\parskip}{2pt plus 0.3ex minus 0.3ex}

\bibliographystyle{auxfile}
\bibliography{bib}

\end{document}